\documentclass[12pt]{article}
\textheight = 9.3in \textwidth = 6.0in \headsep =
0.0in \headheight = 0.0in \topmargin = 0.3in \oddsidemargin=0.1in
\evensidemargin=0.1in
\def\Dj{\hbox{D\kern-.73em\raise.30ex\hbox{-}
		\raise-.30ex\hbox{}}}
\def\dj{\hbox{d\kern-.33em\raise.80ex\hbox{-}
		\raise-.80ex\hbox{\kern-.40em}}}
\usepackage{epsfig}
\usepackage{amsmath,amsthm,amsfonts,amssymb,amscd,cite}
\usepackage{tikz}
\usepackage{pgf}
\usepackage{pgfplots}
\usepackage{tkz-graph}
\usepackage{graphviz}
\usepackage{graphicx}
\graphicspath{{./Figures/}}
\usepackage{float}
\usepackage{multirow}
\usepackage{longtable}
\usepackage{subfig}
\usepackage{adjustbox}
\usepackage{array,makecell}
\usepackage{enumitem} 
\usepackage{mathtools}
\usepackage{mathdots}
\usepackage{changepage}
\allowdisplaybreaks

\newtheorem{theorem}{Theorem}[section]
\newtheorem{lemma}[theorem]{Lemma}
\newtheorem{corollary}[theorem]{Corollary}
\newtheorem{remark}[theorem]{Remark}
\newtheorem{definition}[theorem]{Definition}

\newtheorem{example}[theorem]{Example}

\begin{document}

	\baselineskip=0.30in

	\begin{center}
		{\Large \bf On the eccentric graph of trees}
		
		\vspace{6mm}
		
		{\large \bf Sezer Sorgun $^a$, Esma Elyemani$^a$\,}
		
		\vspace{9mm}
		
		\baselineskip=0.20in
		
		$^a${\it Department of Mathematics,\\
			Nevsehir Hac{\i} Bekta\c{s} Veli University, \\
			Nevsehir 50300, Turkey.\/} \\
		e-mail: {\tt srgnrzs@gmail.com, esmaelyemani@gmail.com}\\[3mm]

	\end{center}
	
	\vspace{6mm}
	
	\baselineskip=0.23in
	
		\begin{abstract}
		 We consider the eccentric graph of a graph $G$, denoted by $ecc(G)$, which has the same vertex set as $G$, and two vertices in the eccentric graph are adjacent iff their distance in $G$ is equal to the eccentricity of one of them. In this paper, we present a fundamental requirement for the isomorphism between $ecc(G)$ and the complement of $G$, and show that the previous necessary condition given in the literature is inadequate. Also we obtain that diameter of $ecc(T)$ is at most $3$ for any tree and get some characterizations of the eccentric graph of trees.

		\bigskip
		
		\noindent
		{\bf Key Words:} Graph, Eccentric Graph, Eccentricity, Distance\\
		\\
		{\bf 2020 Mathematics Subject Classification:} 05C50; 05C75
	\end{abstract}

	\section{Introduction}

	Consider a simple graph $G = (V, E)$, where the adjacency between vertices $v_i$ and $v_j$ is indicated by $v_iv_j \in E(G)$ or $v_i\sim v_j$. The distance between two vertices $u$ and $v$ in $G$ is symbolized as $d_{G}(u, v)$ or, more concisely, as $d(u,v)$. It signifies the minimum number of edges required to traverse from $u$ to $v$ in the graph. The eccentricity $ecc(u)$ of a vertex $u$ in $G$ is the maximum distance between $u$ and any vertex $v$ in the graph. In other words, it measures how far apart $v$ is from the farthest vertex in $G$. The diameter and radius of any graph $G$, denoted by $diam(G)$ and $rad(G)$,  are the maximum and minimum eccentricity among all vertices, respectively. If a graph $G$ satisfies the condition $diam(G)=rad(G)=d$ for some value $d$, it is referred to as a $d$-self-centered graph. This means that eccentricities of all vertices in $G$ are the same.

Consider a collection of mutually disjoint nonempty finite sets ${V_1, \ldots, V_n}$ of any graph $G$. The construction of a graph, formed by taking the union of the sets $V_1, \ldots, V_n$, is as follows: for each $i$, the vertices within $V_i$ are either all adjacent to each other (forming a clique) or all non-adjacent to each other (forming a coclique). In the graph, two vertices, one from $V_i$ and another from $V_j$, are adjacent if and only if $i\sim j$ in $G$.

The mixed extension $\mathcal{ME}_G[\pm p_1, \pm p_2, \ldots, \pm p_n]$ (also referred to as the mixed extension of $G$ of type $(\pm p_1, \pm p_2, \ldots, \pm p_n)$) is constructed by extending the vertex set of $G$ and specifying the cliques and cocliques according to the given types. In this notation, $+p_i$ represents a clique of size $p_i$, and $-p_i$ represents a coclique of size $p_i$. If the vertices of $G$ consist of cocliques (cliques), then the extension will be referred to as a coclique (clique) extension. \cite{haemers2019spectral}. 

A tree is a connected undirected graph without any cycles or loops. It consists of vertices (also known as nodes) and edges. In a rooted tree, one vertex is chosen as the root, which serves as the starting point for traversing the tree. Each vertex in the tree has a level, indicating the number of edges in the path from that vertex to the root. The root itself is at level 0. Consider two vertices, denoted as $v$ and $w$, in a rooted tree. We say that vertex $v$ is an ancestor of $w$ if it lies on the unique path from $w$ to the root. Similarly, $w$ is considered a descendant of $v$. A vertex in a rooted tree is called a leaf if it does not have any children, meaning it is at the end of a branch and has no vertices connected below it.

The eccentric graph $ecc(G)$ of any graph $G$ has the same vertex set with $G$ and two vertices in $ecc(G)$ are adjacent iff their distance in $G$ is equal to the eccentricity of one of them. The eccentric graph of any graph can be useful in studying properties of the original graph related to distance and connectivity. The study of eccentric graphs has been motivated by their applications in network analysis, social network analysis and image processing. The eccentric graph of a graph has been first introduced by Akiyama et. al in 1985 \cite{akiyama}. The eccentric graph of some graphs with special type such as unique eccentric point graphs, path graphs, cycle graphs etc. (see \cite{siriram, pal, kaspar}).
 	
One of the fundamental problems in graph theory is to determine when two graphs are isomorphic. In this paper, we focus on the problem of determining when $ecc(G)$ is isomorphic to the complement of $G$. This problem has been investigated by Akiyama et. al \cite{akiyama}, where they gave a necessary condition for $ecc(G)$ to be isomorphic to the complement of $G$. However, we show that this condition is inadequate by presenting an example of a graph $G$ whose eccentric graph is isomorphic to its complement, but does not satisfy the previous necessary condition.	

Motivated by this, we correct the necessary condition for $ecc(G)\cong \overline{G}$. We prove our result by using the necessary condition given in \cite{akiyama}, and show that our condition is stronger than the previous one. We also obtain the structure of $ecc(T)$ with respect to its center(s). Finally, we show that the diameter of the eccentric graph of any tree is at most $3$.
	\vspace*{3mm}
	
	\section{Main Results}
	
	\begin{definition}\cite{akiyama}
		Let $G$ be a graph with  vertex set $V$ and edge set $E$. The eccentric graph of $G$ is denoted by  $ecc(G)= (V, E')$ where $E'= \{(u, v) :\ d_G(u, v) = ecc(u) \ \text{or}\ \ d_G(u, v) = ecc(v) \ \text{for} \ u, v \in V\}$.
		\end{definition}
	
	The eccentric graph of some extremal graphs such as path, complete, cycle etc. is given the following table by using the concept of eccentric graph.
	\begin{table}[H]
		\caption{Eccentric graphs of extremal graphs}
		\hskip-1cm
		\centering
		\begin{tabular}{|l|l|}
			\hline
			\hline
			$G$ &  $ecc(G)$  \\
			\hline
			Complete graph $K_n$ &   Complete Graph $K_n$  \\
			Star graph $S_n$ & Complete graph $K_n$  \\
			Cycle graph $C_{2n}$ & Disjoint union of $K_2$ \\
			Cycle graph $C_{2n-1}$ &   Cycle graph $C_{2n-1}$ \\
			Path graph $P_{2n}$&  Double star \\
			Path graph $P_{2n-1}$&   $S^*_{\frac{n-3}{2},\frac{n-3}{2}}$  (Figure 1).\\
			Complete bipartite graph $K_{n,m}$ & Disjoint union of $K_n$ and $K_m$\\
			\hline
		\end{tabular} 
		\label{tab:EccentricGraphsDatas}
	\end{table}

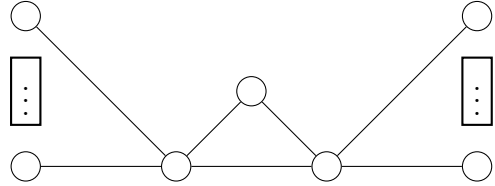
\begin{figure}[h]
	\centering 
	\begin{tikzpicture}
		
		\node[circle, draw] (1) at (0,0) {};
		\node[circle, draw] (2) at (2,0) {};
		\node[circle, draw] (3) at (1,1) {};
		\node[circle, draw] (4) at (4,0) {};
		\node[circle, draw] (5) at (-2,0) {};
		\node[thick, draw] (6) at (-2,1) {$\vdots$};
		\node[circle, draw] (7) at (-2,2) {};
		\node[circle, draw] (8) at (4,2) {};
		\node[thick, draw] (9) at (4,1) {$\vdots$};
		\
		\draw (1) -- (2);
		\draw (1) -- (3);
		\draw (2) -- (3);
		\draw (2) -- (4);
		\draw (1) -- (5);
		\draw (1) -- (7);
		\draw (2) -- (8);
	\end{tikzpicture}
	\caption{$S^{*}_{\frac{n-3}{2},\frac{n-3}{2}}$}
\end{figure}

	\begin{theorem} \cite{akiyama} \label{A}
		 $ecc(G)\cong \overline{G}$ if and only if $S_i=\emptyset$ for $i=1,4,5,6\ldots$  and no two vertices in $S_3$ have a common neighbour , where $S_i=\{u\in V: ecc(u)=i\}$.
	\end{theorem}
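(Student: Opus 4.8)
The plan is to establish both implications by first upgrading the isomorphism $ecc(G)\cong\overline G$ to an \emph{equality} of graphs on the common vertex set, and then verifying that equality edge by edge according to the value of the distance $d_G(x,y)$. Throughout one may assume $G$ is connected and has at least two vertices, the remaining cases being immediate. The reduction step is the elementary remark that \emph{if $S_1=\emptyset$ then $ecc(G)$ is a spanning subgraph of $\overline G$}: an edge $xy$ of $ecc(G)$ has $d_G(x,y)=ecc(x)\ge 2$ or $d_G(x,y)=ecc(y)\ge 2$, so $xy\notin E(G)$, i.e. $xy\in E(\overline G)$. Since a spanning subgraph of $\overline G$ with exactly $|E(\overline G)|$ edges must coincide with $\overline G$, and isomorphic graphs have equally many edges, we get: \emph{whenever $S_1=\emptyset$, $ecc(G)\cong\overline G$ is equivalent to $ecc(G)=\overline G$.} So the real content is a characterisation of $ecc(G)=\overline G$, and the hypotheses of the theorem are exactly designed to force $S_1=\emptyset$ together with $diam(G)\le 3$.

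For the \textbf{if} direction, assume $S_i=\emptyset$ for $i\in\{1,4,5,\dots\}$ and that no two vertices of $S_3$ have a common neighbour. Every eccentricity then lies in $\{2,3\}$, so $S_1=\emptyset$ (reduction applies) and $diam(G)\le 3$, whence every pair of distinct vertices is at distance $1$, $2$, or $3$. If $d_G(x,y)=1$ then $xy\notin E(\overline G)$ and $xy\notin E(ecc(G))$ (as $ecc(x),ecc(y)\ge 2$). If $d_G(x,y)=3$ then $ecc(x)=ecc(y)=3$ (being $\ge 3$ and $\le diam(G)\le 3$), so $xy\in E(ecc(G))$, and also $xy\in E(\overline G)$. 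If $d_G(x,y)=2$ then $xy\in E(\overline G)$, and $xy\in E(ecc(G))$ unless $ecc(x)=ecc(y)=3$, i.e. unless $x,y\in S_3$; but the midpoint of a shortest $x$–$y$ path would then be a common neighbour of two vertices of $S_3$, contrary to hypothesis. Hence $ecc(G)=\overline G$.

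For the \textbf{only if} direction, assume $ecc(G)\cong\overline G$. First, $S_1=\emptyset$: a vertex $u$ with $ecc(u)=1$ is universal in $G$, so $\deg_{ecc(G)}(u)=n-1$ while $\deg_{\overline G}(u)=0$; an isomorphism would then force $\overline G$ to have a vertex of degree $n-1$, that is, $G$ to have an isolated vertex, impossible in the presence of a universal one. Now the reduction gives $ecc(G)=\overline G$. Next, $diam(G)\le 3$: if a shortest path $x_0x_1\cdots x_D$ had $D\ge 4$, then $d_G(x_1,x_3)=2$ would put $x_1x_3\in E(\overline G)=E(ecc(G))$, forcing $2\in\{ecc(x_1),ecc(x_3)\}$, yet $ecc(x_1)\ge d_G(x_1,x_D)=D-1\ge 3$ and $ecc(x_3)\ge d_G(x_3,x_0)=3$ — a contradiction; so $S_i=\emptyset$ for $i\ge 4$. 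It remains to rule out two vertices $u,v\in S_3$ with a common neighbour $w$. Then $d_G(u,v)\le 2$; if $d_G(u,v)=2$, the edge $uv\in E(\overline G)=E(ecc(G))$ would require $2\in\{ecc(u),ecc(v)\}=\{3\}$, a contradiction.

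The step I expect to be the main obstacle is precisely the last one, in the remaining configuration: $u,v\in S_3$ \emph{adjacent} and sharing a neighbour $w$. The obvious line of attack is to pick $z$ with $d_G(u,z)=3$ and use the triangle inequality to pin $d_G(v,z)$ and $d_G(w,z)$ into $\{2,3\}$; the sub-cases in which one of these distances equals $2$ collapse as before, but the sub-case in which they are all equal to $3$ places no constraint on the equality $ecc(G)=\overline G$ at all, since pairs at mutual distance $3$ are automatically edges of both graphs. Thus the natural argument does not close here — and this is exactly the point at which the hypothesis ``no two vertices of $S_3$ have a common neighbour'' turns out to be too strong: the argument really only needs (and only yields) ``no two vertices of $S_3$ lie at distance exactly $2$''. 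Pinning this down, and exhibiting a graph with $ecc(G)=\overline G$ in which two adjacent vertices of $S_3$ do share a neighbour, is the correction the paper goes on to make.
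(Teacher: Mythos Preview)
Your analysis is correct and matches the paper's treatment exactly. The paper does \emph{not} prove Theorem~\ref{A}: it is quoted from \cite{akiyama}, and the very next paragraph asserts that the necessary condition is ``missing'' and exhibits the counterexample in Figure~2 --- a graph with $ecc(G)\cong\overline G$ in which two (adjacent) vertices of $S_3$ share a common neighbour. Your proof of the ``if'' direction is sound, and your dissection of the ``only if'' direction pinpoints precisely the irreducible sub-case (adjacent $u,v\in S_3$ with a common neighbour, and every relevant distance equal to $3$) where the argument cannot close; the paper's Figure~2 realises exactly this configuration. Your final paragraph anticipates the paper's corrected Theorem~2.3, which replaces ``no two vertices in $S_3$ have a common neighbour'' by ``$d(u,v)\neq 2$ for any $u,v\in S_3$'' --- the condition your edge-by-edge analysis actually uses and actually yields.
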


 Theorem \ref{A} is given by Akiyama et. al \cite{akiyama}. But the neccessary condition for $ecc(G)\cong \overline{G}$ in the theorem is missing. The $G$ graph in Fig. 2. does not satify one of necessary condition but $ecc(G)\cong \overline{G}$. 
 
\begin{figure}[h]
	\centering 
	\begin{tikzpicture}
		
		\node[circle, draw] (3) at (0,0) {3};
		\node[circle, draw] (2) at (2,0) {2};
		\node[circle, draw] (7) at (1,3) {7};
		\node[circle, draw] (5) at (0,2) {5};
		\node[circle, draw] (6) at (2,2) {6};
		\node[circle, draw] (1) at (0,5) {1};
		\node[circle, draw] (4) at (2,5) {4};
		
		\
		\draw (1) -- (4);
		\draw (1) -- (7);
		\draw (7) -- (5);
		\draw (7) -- (6);
		\draw (5) -- (6);
		\draw (4) -- (7);
		\draw (5) -- (3);
		\draw (6) -- (2);
	\end{tikzpicture}, \ \ \ \ \ \ \begin{tikzpicture}
		
		\node[circle, draw] (2) at (0,0) {2};
		\node[circle, draw] (4) at (2,-1) {4};
		\node[circle, draw] (3) at (2,2) {3};
		\node[circle, draw] (7) at (0,2) {7};
		\node[circle, draw] (6) at (4,2) {6};
		\node[circle, draw] (1) at (4,0) {1};
		\node[circle, draw] (5) at (2,-3) {5};
		
		\
		\draw (7) -- (3);
		\draw (7) -- (2);
		\draw (6) -- (3);
		\draw (6) -- (1);
		\draw (1) -- (2);
		\draw (2) -- (3);
		\draw (2) -- (4);
		\draw (2) -- (5);
		\draw (4) -- (5);
		\draw (4) -- (6);
		\draw (4) -- (3);
		\draw (5) -- (1);
		\draw (1) -- (3);
	\end{tikzpicture} \\
$G$\ \ \ \  \ \ \ \ \ \ \ \ \ \ \ \ \ \ \ $ecc(G)=\overline{G}$
	\caption{In the graph $G$, $S_3=\{1,2,3,4\}$ and its eccentric graph on the right side is also isomorphic to its complement.}
\end{figure}
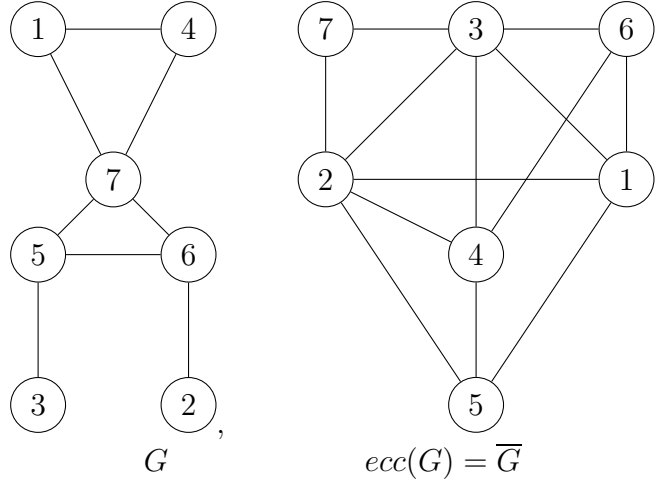

\begin{theorem}
	Let $G=(V,E)$ be a graph of diameter $d$. $ecc(G)$ is isomorphic to the complement of $G$ if and only if  $S_i= \emptyset$ for $i=1,4,5,\ldots$  and $d(u,v)\neq 2$ in $G$ for any $u,v\in S_3$, where  $S_i=\{u\in V: ecc(u)=i\}$.
\end{theorem}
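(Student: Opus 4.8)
The plan is to prove the two implications separately; the engine in both is one structural observation. As soon as $S_1=\emptyset$, i.e.\ no vertex has eccentricity $1$, the graph $ecc(G)$ is a spanning subgraph of $\overline G$: an edge $uv$ of $ecc(G)$ satisfies $d_G(u,v)=ecc(u)\ge 2$ or $d_G(u,v)=ecc(v)\ge 2$, hence $uv\notin E(G)$. Because a spanning subgraph of $\overline G$ that is isomorphic to $\overline G$ must equal $\overline G$ (same vertex set, one edge set contained in the other, and the same number of edges), the whole question reduces --- once $S_1=\emptyset$ is known --- to deciding precisely when $ecc(G)=\overline G$ as labelled graphs. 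I take $G$ connected with $n\ge 2$ vertices throughout, so that eccentricities are finite.

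For the ``if'' direction I would assume $S_i=\emptyset$ for $i=1,4,5,\ldots$ and $d_G(u,v)\ne 2$ for all $u,v\in S_3$; then every vertex has eccentricity $2$ or $3$, so $d_G(x,y)\le 3$ for all $x,y$. The verification that $ecc(G)=\overline G$ then splits according to the value of $d_G(u,v)$ for a pair $u\ne v$. When $d_G(u,v)=1$, both graphs omit the edge (since $1<ecc(u),ecc(v)$). When $d_G(u,v)=3$, we have $ecc(u)=ecc(v)=3=d_G(u,v)$, so both graphs contain the edge. When $d_G(u,v)=2$, the edge is present in $\overline G$, and it is present in $ecc(G)$ unless $ecc(u)=ecc(v)=3$, i.e.\ unless $u,v\in S_3$ --- which is exactly what the hypothesis excludes. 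Hence $ecc(G)=\overline G$, and in particular $ecc(G)\cong\overline G$.

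For the ``only if'' direction I would assume $ecc(G)\cong\overline G$ and first use the necessary condition of Theorem~\ref{A} to obtain $S_i=\emptyset$ for $i=1,4,5,\ldots$ (the part of Akiyama's statement untouched by the correction; note that $S_1=\emptyset$ is in any case immediate, since a vertex of eccentricity $1$ would be universal in $ecc(G)$, whereas $\overline G$ has no universal vertex when $G$ is connected). With $S_1=\emptyset$ in hand, the observation of the first paragraph promotes the isomorphism to the equality $ecc(G)=\overline G$. Finally, if some $u,v\in S_3$ had $d_G(u,v)=2$, then $uv\in E(\overline G)$ but $d_G(u,v)=2\notin\{ecc(u),ecc(v)\}=\{3\}$, so $uv\notin E(ecc(G))$, contradicting $ecc(G)=\overline G$; hence $d_G(u,v)\ne 2$ for all $u,v\in S_3$.

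The step I expect to be the main obstacle --- and the one that deserves a careful write-up --- is the passage from ``$ecc(G)\cong\overline G$'' to ``$ecc(G)=\overline G$'': a bare isomorphism only matches vertex counts, edge counts and degree sequences, and the upgrade genuinely relies on the spanning-subgraph containment, which in turn rests on $S_1=\emptyset$. That is precisely where the necessary condition borrowed from \cite{akiyama} enters. A secondary point to check is the degenerate situations --- the $3$-self-centered case $S_2=\emptyset$ and the small orders $n\le 2$ --- where one should confirm that both sides of the equivalence still agree.
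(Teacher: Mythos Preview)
Your proof is correct and, for the ``if'' direction, proceeds exactly as the paper does: a case split (the paper organises it by membership in $S_2$ versus $S_3$, you by the value of $d_G(u,v)$, but the content is identical). The ``only if'' direction also matches the paper in invoking Theorem~\ref{A} for the emptiness of the $S_i$ and then deriving the $d_G(u,v)\ne 2$ condition by contradiction.

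Where you genuinely add something is the spanning-subgraph step. The paper argues: if $u,v\in S_3$ had $d_G(u,v)=2$, then $u\not\sim v$ in $G$ and $u\not\sim v$ in $ecc(G)$, ``contradicting'' $ecc(G)\cong\overline G$. Strictly speaking that is not yet a contradiction, since an abstract isomorphism need not fix $u$ and $v$. Your observation that $S_1=\emptyset$ forces $E(ecc(G))\subseteq E(\overline G)$, and hence that an isomorphism with $\overline G$ upgrades to the equality $ecc(G)=\overline G$, is exactly what is needed to make that line rigorous. So your route is the paper's route plus this one clarifying lemma; what it buys is that the forward implication no longer silently conflates ``isomorphic'' with ``equal''.
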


\begin{proof}
	
	Suppose that $ecc(G)$ is isomorphic to the complement of $G$. From Theorem \ref{A}, we have $S_i= \emptyset$ for $i=1,4,5,\ldots$. We will now prove that $d(u,v)\neq 2$ for any $u,v\in S_3$.
	
	Assume that there exist vertices $u$ and $v$ in $S_3$ such that $d(u,v)=2$ in $G$. This implies that $u$ and $v$ are not adjacent in $G$. Moreover, in $ecc(G)$, they are also not adjacent because $d_{G}(u,v)\neq 3$. However, this contradicts the assumption that $ecc(G)$ is isomorphic to the complement of $G$. Therefore, we can conclude that $d(u,v)\neq 2$ in $G$ for any $u,v\in S_3$.
	
	Conversely, let $u$ and $v$ be vertices in $S_3$. Since $d_{G}(u,v)\neq 2$, we have $d_{G}(u,v)=1$ or $d_{G}(u,v)=3$. If $d(u,v)=1$ in $G$, then $u\not\sim v$ in $ecc(G)$. If $d(u,v)=3$ in $G$ (which means $u\not\sim v$ in $G$), then $u\sim v$ in $ecc(G)$. Hence, we can conclude that $u\sim v$ in $G$ if and only if $u\not\sim v$ in $ecc(G)$ for $u,v\in S_3$.
	
	Now, let $u\in S_2$ and $v\in S_3$. If $u\sim v$ in $G$, then $u\nsim v$ in $ecc(T)$ since $d_G(u,v)=1\neq ecc(u)$ (also $ecc(v)$). If $u\nsim v$ in $G$, we have $d(u,v)=2=ecc(u)$ in $G$. This implies that $u\sim v$ in $ecc(G)$. Finally, let $u,v\in S_2$. In this case, $u\sim v$ in $G$ if and only if $u\not\sim v$ in $ecc(G)$ because $S_i=\emptyset$ for $i=1,4,5,\ldots$.
	
	Therefore, in $ecc(G)$, two vertices are adjacent if and only if they are not adjacent in $G$. This implies that $ecc(G)\cong \overline{G}$.

\end{proof}

	\begin{lemma} \cite{jordan}
		Every tree has either one or two centers.
	\end{lemma}

	\begin{lemma} \label{c}
		If $T$ is a tree with two centers,  $ecc(T)$ must be isomorphic to any coclique extension of $P_4$.
	\end{lemma}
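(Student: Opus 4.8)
The plan is to analyze the structure of a tree $T$ with two centers, say $c_1$ and $c_2$, which must be adjacent (this is standard: the two centers of a tree are joined by an edge, since the center lies on every diametral path). Let $r = rad(T)$, so $diam(T) = 2r-1$ because a tree with two centers has odd diameter. First I would partition $V(T)$ according to eccentricity: every vertex $u$ has $r \le ecc(u) \le 2r-1$. I claim the only eccentricities that occur are $r$ (exactly at $c_1$ and $c_2$) and $2r-1$. To see that intermediate values are impossible, I would use the fact that removing the central edge $c_1c_2$ splits $T$ into two subtrees $T_1 \ni c_1$ and $T_2 \ni c_2$; a vertex at distance $k$ from its own center $c_i$ has eccentricity exactly $r+k$ whenever the opposite subtree realizes the depth $r-1$ from the far center — which it does, by definition of the radius and the existence of a diametral path through the central edge. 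Carrying this out carefully gives: $ecc(u) = r$ iff $u \in \{c_1,c_2\}$, and $ecc(u) = 2r-1$ for every leaf-side vertex at maximal depth, while vertices at intermediate depth have intermediate eccentricity — so I must be more careful here, and the cleanest route is to observe that in fact the relevant equivalence classes for $ecc(T)$ are determined only by which side of the central edge a vertex lies on together with whether it is a deepest vertex.

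The key realization is that $ecc(T)$ need not depend on the full metric of $T$, only on the adjacency rule. So the second step is to determine, for each pair of vertices $u,v$, whether $d_T(u,v) = ecc(u)$ or $ecc(v)$. I would show that the vertex set of $ecc(T)$ breaks into four natural blocks: $A_1 = \{c_1\}$-side deepest vertices, $B_1 = \{c_1\}$, $B_2 = \{c_2\}$, $A_2 = \{c_2\}$-side deepest vertices — and more generally all non-central vertices on side $i$ collapse into one block $A_i$ because the adjacency pattern in $ecc(T)$ only distinguishes "is this vertex a center or not" and "which side." Concretely: a non-central vertex $u$ on side $1$ is adjacent in $ecc(T)$ exactly to those $v$ with $d_T(u,v)=ecc(u)=2r-1$ (when $u$ is deepest) — these are precisely the deepest vertices on side $2$ — plus possibly to $c_2$ if $ecc(c_2)=r$ happens to equal $d_T(u,c_2)$. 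I would verify the adjacencies block-by-block and check that the resulting graph on the four blocks $A_1, B_1, B_2, A_2$ is exactly the path $B_1 A_1 ? \ldots$ — wait, I need to confirm the pattern is $A_1 - B_2$, $A_2 - B_1$, $B_1 - B_2$, and no edges within blocks and no $A_1$–$A_2$, $A_1$–$B_1$, $A_2$–$B_2$ edges. That adjacency pattern on four blocks, where each block is a coclique, is precisely a coclique extension of $P_4$ with the path order $A_1, B_2, B_1, A_2$.

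The main obstacle I anticipate is the eccentricity bookkeeping in the first step: I need to handle non-deepest non-central vertices and show they still land in the same coclique block as the deepest ones (i.e., that the claimed adjacency pattern genuinely ignores their exact depth). The resolution is that for a non-deepest vertex $u$ on side $1$ at depth $k < r-1$ from $c_1$, we have $ecc(u) = (r-1) + 1 + k' $ for the appropriate quantity, and one checks $d_T(u,v) = ecc(u)$ holds for exactly the same target set (the deepest side-$2$ vertices and $c_2$ when applicable) — this requires the inequality $ecc(u) \ge d_T(u, w)$ with equality only for those $w$, which follows because the farthest point from any side-$1$ vertex lies on side $2$ at maximal depth. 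Once the four-block adjacency structure is pinned down, identifying it as a coclique extension of $P_4$ is immediate from the definition of mixed extension given in the introduction, since every block is an independent set and blocks are joined completely-or-not-at-all in the $P_4$ pattern.
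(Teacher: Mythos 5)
Your overall strategy --- split $T$ at the central edge $c_1c_2$, sort the vertices into four blocks, and read off a coclique blow-up of $P_4$ --- is the right one and is essentially the paper's, but the four blocks you finally settle on, and the adjacency pattern you assign them, are both wrong. You collapse all non-central vertices of side $i$ into a single block $A_i$, set $B_i=\{c_i\}$, and claim the $P_4$ order $A_1,B_2,B_1,A_2$, so that in particular $c_1\sim c_2$ in $ecc(T)$ and there are no $A_1$--$A_2$ edges. Both claims fail: $d_T(c_1,c_2)=1<r=ecc(c_1)$, so the two centers are \emph{not} adjacent in $ecc(T)$ (for $r\ge 2$); and any two diametrical vertices on opposite sides are at distance $d=2r-1$, which equals their common eccentricity, so they \emph{are} adjacent in $ecc(T)$, giving $A_1$--$A_2$ edges. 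Worse, $A_i$ is not even a legitimate block of a blow-up: in the paper's Figure~3, vertex $2$ (depth $1$ on the side of center $1$) is adjacent in $ecc(T')$ to $14\in A_2$ but not to $10\in A_2$, so the vertices of $A_2$ do not share a common neighbourhood.

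The correct classes --- which you actually name in your first paragraph (``which side of the central edge together with whether it is a deepest vertex'') before abandoning them --- split each side into its diametrical vertices $U_i$ (depth $r-1$ from $c_i$) and everything else $W_i$, with $c_i$ lumped into $W_i$. A vertex $u$ on side $1$ at depth $k$ has $ecc(u)=k+r$, attained exactly at the vertices of $U_2$ (any side-$1$ target is at distance at most $k+(r-1)<k+r$); and if $u\in U_1$ then in addition $d_T(u,v)=r+d_T(v,c_2)=ecc(v)$ for \emph{every} $v$ on side $2$. Hence the complete joins are exactly $U_1$--$U_2$, $U_1$--$W_2$ and $U_2$--$W_1$, and every other pair (within a block, within one side, or $W_1$--$W_2$) is non-adjacent. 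The quotient is therefore the path $W_1\,U_2\,U_1\,W_2$: the two \emph{diametrical} blocks sit in the middle of the $P_4$, not the two centers. This is the paper's $\mathcal{ME}_{P_4}[-|T_{c_1}|,-|U_2|,-|U_1|,-|T_{c_2}|]$, and your write-up needs this corrected block structure before the final identification step goes through.
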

\begin{proof}

Let $T$ be a tree with diameter $d$ and radius $r$. Assume that $T$ has two center vertices, denoted as $c_1$ and $c_2$.

Consider the bridge in $T$ connecting $c_1$ and $c_2$, denoted as $c_1c_2$.

We can partite the vertex set of $T$ into two subsets with respect to this bridge as $V_1 =\{v \in V : \text{there exists a path in } T \text{ from } v \text{ to } c_1 \text{ that does not cross } c_1c_2\}$ and $V_2 = \{v \in V : \text{there exists a path in } T \text{ from } v \text{ to } c_2 \text{ that does not cross } c_1c_2\}$. These partitions are disjoint, meaning that no vertex belongs to both $V_1$ and $V_2$. Define $T_{c_1}=\{x\in V_1: d_{T}(x,c_1)< r\}$ and $T_{c_2}=\{x\in V_2: d_{T}(x,c_2)< r\}$. Notice that the remaining vertices are the diametrical vertices in both $V_1$ and $V_2$. Now let $U_1$ and $U_2$ be the set of diametrical vertices in $V_1$ and $V_2$, respectively. Since $d_{T}(x,y)=d$ for $x\in U_{1}, y\in U_2$, by definition of the eccentric graph, they must be adjacent each other. Hence, $ecc(T)$ contains the complete bipartite graph $K_{\vert U_1\vert,\vert U_2\vert}$ as an induced subgraph. Also, for any $x\in T_{c_1}$ and for $y\in U_2$, we get $x\sim y$ in $ecc(T)$ since $d_T(x,y)=ecc(x)$. Similarly, for any $x\in T_{c_2}$ and for $y\in U_1$, we also get $x\sim y$ in $ecc(T)$. Therefore $ecc(T)$ is isomorphic to $\mathcal{ME}_{P_4}[-p_1,-p_2,-p_3,-p_4]$ where $p_1=\vert T_{c_1}\vert$,  $p_2=\vert U_2\vert$, $p_3=\vert U_1\vert$ and  $p_4=\vert T_{c_2}\vert$.   
\end{proof}

\begin{example}
$T'$ in the Fig.3. is a tree with diameter $5$ and radius $3$. We have, $C(T')= \{1,9\}$; $S_{5}=\{4,5,6,7,8,14,15\}=\{4,5,6,7,8\}\cup \{14,15\}$; $T_{1}=\{1,2,3\}$ and $T_{9}=\{9,10, 11,12, 13\}$. Hence $ecc(T')\cong \mathcal{ME}_{P_4}[-3,-2,-5,-5]$\\

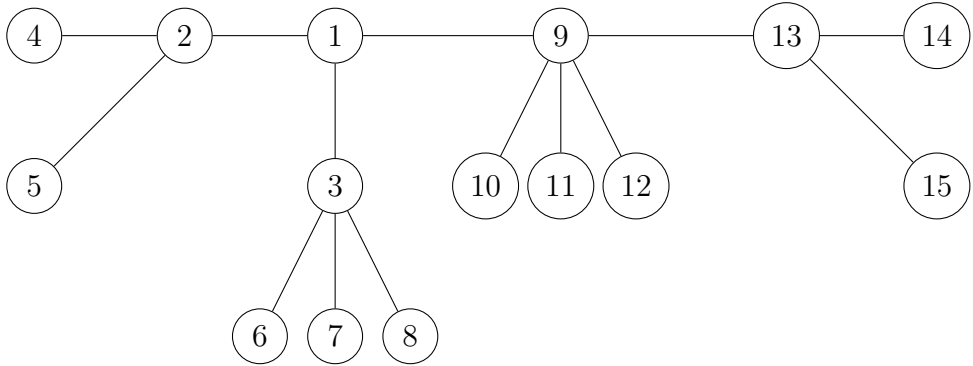
\begin{figure}[h]
	\centering
	\begin{tikzpicture}
		
		\node[circle, draw] (4) at (0,0) {4};
		\node[circle, draw] (2) at (2,0) {2};
		\node[circle, draw] (1) at (4,0) {1};
		\node[circle, draw] (9) at (7,0) {9};
		\node[circle, draw] (13) at (10,0) {13};
		\node[circle, draw] (14) at (12,0) {14};
		\node[circle, draw] (5) at (0,-2) {5};
		\node[circle, draw] (3) at (4,-2) {3};
		\node[circle, draw] (11) at (7,-2) {11};
		\node[circle, draw] (10) at (6,-2) {10};
		\node[circle, draw] (12) at (8,-2) {12};
		\node[circle, draw] (15) at (12,-2) {15};
		\node[circle, draw] (7) at (4,-4) {7};
		\node[circle, draw] (6) at (3,-4) {6};
		\node[circle, draw] (8) at (5,-4) {8};
		\draw (4) -- (2);
		\draw (5) -- (2);
		\draw (2) -- (1);
		\draw (1) -- (9);
		\draw (9) -- (13);
		\draw (13) -- (14);
		\draw (13) -- (15);
		\draw (1) -- (3);
		\draw (3) -- (6);
		\draw (3) -- (7);
		\draw (3) -- (8);
		\draw (9) -- (10);
		\draw (9) -- (11);
		\draw (9) -- (12);

	\end{tikzpicture}
	\caption{A tree $T'$ with 15 vertices}
\end{figure}
\end{example}

\begin{lemma} \label{cc}
Let $T$ be a tree with exactly one center vertex. The graph $ecc(T)$ is a $2$-self centered graph if and only if among the diametrical leaves, there is at least one triple $(x,y,z)$ such that the distance between each pair of vertices $x$,$y$ and $z$ is equal to the diameter of $T$.
\end{lemma}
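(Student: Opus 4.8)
The plan is to work out the adjacency structure of $ecc(T)$ completely in terms of the branches of $T$ at its unique center $c$, and then read off the $2$-self-centered condition. Put $r=rad(T)$ and $d=diam(T)$. Since $T$ has exactly one center, $d=2r$ is even and $c$ has at least two \emph{branches} (components of $T-c$) of depth exactly $r$: if only one branch reached depth $r$, the neighbour of $c$ in it would also have eccentricity $r$, contradicting uniqueness of the center. The first thing I would establish is the identity $ecc(v)=r+d_T(c,v)$ for every vertex $v$ — from $v$ one always reaches depth $r$ in some depth-$r$ branch not containing $v$ by passing through $c$, and no vertex is farther. Hence the diametrical leaves are precisely the vertices at distance $r$ from $c$; grouped by the branch containing them they form nonempty sets $L_1,\ldots,L_k$, where $k\ge 2$ is the number of depth-$r$ branches, and two diametrical leaves are at distance $d$ iff they lie in different $L_i$. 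So the triple required by the lemma exists iff $k\ge 3$. I would treat the case $r\ge 2$; for $r=1$ the tree is a star, $ecc(T)$ is complete and hence not $2$-self-centered, so the statement must be read with $d\ge 4$ (equivalently, $T$ not a star) understood.

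Next I would pin down $ecc(T)$ using the eccentricity formula. If $d_T(c,u),d_T(c,v)\le r-1$ then $d_T(u,v)\le d_T(c,u)+d_T(c,v)$ is strictly smaller than both $ecc(u)$ and $ecc(v)$, so the set $W$ of vertices that are not diametrical leaves (which contains $c$) is an independent set of $ecc(T)$. For $v\in W\setminus\{c\}$ lying in a branch $B_i$ and a diametrical leaf $x$ one has $d_T(x,v)<2r=ecc(x)$ always, while $d_T(x,v)=d_T(c,x)+d_T(c,v)=ecc(v)$ exactly when $c$ lies on the path from $x$ to $v$, that is, when $x\notin L_i$; and $c\sim y$ in $ecc(T)$ iff $d_T(c,y)=r$. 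Thus $ecc(T)$ has the following shape: on $L:=L_1\cup\ldots\cup L_k$ it is the complete multipartite graph with parts $L_1,\ldots,L_k$; the vertex $c$ is joined to all of $L$ and to nothing else; and every $v\in W\setminus\{c\}$ lying in $B_i$ is joined exactly to $L\setminus L_i$.

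From this skeleton the rest is bookkeeping. First, for $r\ge 2$ the graph $ecc(T)$ has no universal vertex: $c$ misses every vertex of $W\setminus\{c\}$ (nonempty, since each depth-$r$ branch contains internal vertices), such a vertex misses $c$, and a diametrical leaf in $B_i$ misses its parent, which lies in $W\setminus\{c\}$ inside $B_i$. As a graph is $2$-self-centered iff it has diameter at most $2$ and no universal vertex, it remains to decide when $diam(ecc(T))\le 2$. If $k\ge 3$, I would verify that every non-adjacent pair has a common neighbour: pairs inside $L$, or involving $c$, are covered by $c$ or by any third part $L_\ell$, and the only delicate pair is $u,w\in W\setminus\{c\}$ lying in distinct depth-$r$ branches $B_i,B_j$, where $N(u)\cap N(w)=L\setminus(L_i\cup L_j)$ is nonempty exactly because a third depth-$r$ part $L_\ell$ is available. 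Conversely, if $k=2$, choose $u$ and $w$ at distance $<r$ from $c$ in the two depth-$r$ branches (parents of diametrical leaves will do, since $r\ge 2$); then $N(u)=L_2$ and $N(w)=L_1$ are disjoint and $u\not\sim w$, so $d_{ecc(T)}(u,w)=3$. With $k=1$ impossible, this shows $ecc(T)$ is $2$-self-centered iff $k\ge 3$, i.e.\ iff the required triple of diametrical leaves exists.

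The main obstacle is the middle step: showing that the non-diametrical vertices form an independent set in $ecc(T)$ and that each is joined exactly to the diametrical leaves outside its own branch. Once that description is secured, the diameter analysis comes down to the presence or absence of a third depth-$r$ part. A secondary point requiring care is the degenerate case $d=2$, for which $ecc(T)$ is complete and the stated equivalence fails, so it should be excluded from the hypothesis or noted explicitly.
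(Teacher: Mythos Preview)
Your argument is correct and takes a genuinely different route from the paper. The paper roots $T$ at $c$ and, for sufficiency, fixes a concrete triple $(x,y,z)$ and runs a four-case analysis on where an arbitrary pair $u,v$ sits relative to the three root-to-leaf paths, exhibiting in each case either an adjacency or a common neighbour among $x,y,z$; for necessity it picks vertices $u,w$ at equal depth on the two diametrical paths and argues directly that $d_{ecc(T)}(u,w)=3$. You instead derive the global structure of $ecc(T)$ from the identity $ecc(v)=r+d_T(c,v)$: the non-diametrical vertices form an independent set, and each is joined precisely to the diametrical leaves outside its own branch, so $ecc(T)$ is a complete $k$-partite graph on $L_1,\ldots,L_k$ with independent sets hung off the parts. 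This recasts the lemma as the clean dichotomy $k\ge 3$ versus $k=2$ and, as a dividend, yields the full isomorphism type of $ecc(T)$, from which the subsequent corollary and theorem are immediate. Your remark on the degenerate case $r=1$ (stars) is also well taken: the paper's proof tacitly assumes $r\ge 2$, and for $K_{1,n}$ with $n\ge 3$ a triple exists while $ecc(T)=K_{n+1}$ is $1$-self-centered, so the hypothesis should exclude this case. One small point worth making explicit in your write-up is the situation of $v\in W\setminus\{c\}$ lying in a branch of depth strictly less than $r$; there $N_{ecc(T)}(v)=L$, so such vertices never obstruct the diameter-$2$ condition, and the only delicate pair really is the one you isolate in two distinct depth-$r$ branches.
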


\begin{proof}
	First, consider $T$ with diameter $d$ and radius $r$  as a rooted tree with $c$ as the center vertex. 
	
	Let $ecc(T)$ is a 2-self centered graph. This means that for any two vertices $x, y \in ecc(T)$, we have $d_{ecc(T)}(x, y) = 2$. Suppose that there is no triple $(x, y, z)$ in $T$ such that the vertices $x$, $y$, and $z$ are diametrical leaves of $T$, and the distance between every pair of vertices $x,y,z$ is equal to the diameter of $T$. Note that $T$ is at least two diametrical pair of leaves.  Let $x$ and $y$ be diametrical vertices in $T$. Now, consider the paths $P$ and $Q$ from the root $c$ to $x$ and $y$ respectively. Let $x \neq u$ and $y \neq w$ be vertices at the same level in $P$ and $Q$, respectively. This implies that $d_{T}(u,c)=d_{T}(w,c)=k<r$. Since $d_{T}(u,w)=d_{T}(u,c)+d_{T}(w,c)=k+k<k+r=ecc(u)$, we have $u\nsim v$ in $ecc(T)$. Notice that $d_{T}(u,y)=ecc(u)$ and $d_{T}(x,w)=ecc(w)$, so we get $u\sim y$ and $x\sim w$, hence there is a path $u-y-x-w$ in $ecc(T)$. Since there is no other diametrical vertex which is at distance $d$ with both $x$ and $y$, hence $u$ and $w$ have no common neighbour in $ecc(T)$. $d(u,w)=3$ in $ecc(T)$. But, this contradicts the assumption that $ecc(T)$ is a 2-self centered graph. Therefore, there must exist at least one triple $(x, y, z)$ in $T$ such that the vertices $x$, $y$ and $z$ are diametrical leaves of $T$ such that the distance between each pair of vertices $x,y,z$ is equal to the diameter of $T$.\\
	
	Conversely, let $x$,$y$ and $z$ be the diametrical leaves such that the distance between each pair of vertices $x,y,z$ is equal to the diameter of $T$. \\
	
	Case 1: Let $x\neq u$ and $x\neq v$ be vertices that lies on path $P$ where $P$ is from the root $c$ to $x$ in $T$. In this case, since $d(u,y)=ecc(u)$ and $d(v,y)=ecc(v)$ in $T$, hence $u\sim y$ and $v\sim y$ in $ecc(T)$. This implies that $d(u,v)=2$ in $ecc(T)$. \\
	
	Case 2: Let $x \neq u$ and $y \neq v$ be vertices that lie on the paths $P$ and $Q$, respectively, where $P$ is the path from the root $c$ to $x$ and $Q$ is the path from the root $c$ to $y$ in $T$. If the vertices $u$ and $v$ are at the same level (i.e., $ecc(u) = ecc(v)$) in $T$, we have $u \nsim v$ in $ecc(T)$ because $d(u,c) = d(v,c) = k < r$ in $T$ and $d(u,v) = 2k < k+r = ecc(u) = ecc(v)$. However, $u$ and $v$ have a common neighbor, denoted by $z$, such that $u \sim z$ and $v \sim z$ in $ecc(T)$, implying that $d(u,v) = 2$. Without loss of generality, assume that $u$ and $v$ are at different levels (i.e., $ecc(u) \neq ecc(v)$) in $T$. Let $k_1 = d(u,c) < d(v,c) = k_2$ in $T$. Then we have $d(u,v) = k_1 + k_2$, but $ecc(u) = k_1 + r$ and $ecc(v) = k_2 + r$. Hence, $u$ and $v$ cannot be adjacent in $ecc(T)$. Similarly to the above argument, $u$ and $v$ have a common neighbor $z$, and thus $d(u,v) = 2$ in $ecc(T)$.\\
	 
	Case 3: Let $u$ and $v$ be the vertices that have a common ancestor (except for $c$) with respect to $x$. In this case, it is obvious that $u$ and $v$ are not adjacent in $ecc(T)$, that is, $u \nsim v$ in $ecc(T)$. Additionally, since $u$ and $v$ share a common ancestor, they also have at least two common neighbors ($y$ and $z$) in $ecc(T)$. Furthermore, both $u$ and $v$ are not adjacent to their respective ancestors. Therefore, we can conclude that $d(u,v) = 2$ in $ecc(T)$. \\
	
	Case 4: Let $u$ and $v$ be any two vertices that have no common ancestor (except for $c$) with respect to each of the vertices $x$, $y$, and $z$. Since $x$, $y$, and $z$ are diametrical leaves, the paths from the root $c$ to $x$, $y$, and $z$ in $T$ are pairwise disjoint. Hence, it is straightforward to observe that $u$ and $v$ are adjacent to each of $x$, $y$, and $z$ in $ecc(T)$. Thus, we can conclude that $d(u,v) = 2$ in $ecc(T)$, as there exists a path of length 2 connecting them through either $x$, $y$, or $z$ in $ecc(T)$.\\
	
	Therefore, we have shown that for every $u$, we have $ecc(u)=2$ in  $ecc(T)$. Thus, we have $diam(ecc(T)) = rad(ecc(T)) = 2$, which implies that $ecc(T)$ is a 2-self centered graph. 
	
\end{proof}

\begin{corollary} \label{ss}
Let $T$ be a tree with exactly one center vertex. If among the diametrical leaves, there does not exist any triple $(x,y,z)$ such that the distance between each pair of vertices $x,y$ and $z$ is equal to the diameter of $T$, $diam(ecc(T))=3$. 
\end{corollary}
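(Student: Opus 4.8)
The plan is to prove the two bounds $diam(ecc(T))\ge 3$ and $diam(ecc(T))\le 3$ separately, working as in the proof of Lemma~\ref{cc}: write $c$ for the unique center, $r=rad(T)$, and $d=2r$ for the diameter (even, since $T$ has one center). The single fact I would establish first is the identity $ecc(v)=d_T(v,c)+r$ for every vertex $v$: a longest path leaving $v$ either stays in $v$'s branch at $c$ (length at most $r-d_T(v,c)$) or climbs to $c$ and descends another branch, and since a diametral path forces at least two components of $T-c$ to reach depth $r$, there is always a deepest such component different from $v$'s, which makes the second option attain $d_T(v,c)+r$. (For the statement to be literally correct one should exclude the tiny cases $d\le 2$, where $T$ is a star and $ecc(T)$ is complete; with $d\ge 4$ everything below goes through.)

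For the lower bound I would recycle the construction from the first half of the proof of Lemma~\ref{cc}. The hypothesis here is exactly the negation of the right-hand side of Lemma~\ref{cc}, so $ecc(T)$ is not $2$-self-centered; but since a connected graph that is not $2$-self-centered could still have diameter $1$ or $2$, I would not stop there and instead reproduce the explicit pair: take a diametral pair of leaves $x,y$ (their paths to $c$ meet only at $c$) and let $u,w$ be the neighbours of $c$ towards $x$ and $y$. Then $d_T(u,y)=1+r=ecc(u)$, $d_T(w,x)=1+r=ecc(w)$ and $d_T(x,y)=d$, so $u-y-x-w$ is a path in $ecc(T)$; meanwhile $u\nsim w$ there (as $d_T(u,w)=2<r+1$) and, because there is no triple of pairwise-$d$-distant diametral leaves, $u$ and $w$ have no common neighbour in $ecc(T)$. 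Hence $d_{ecc(T)}(u,w)=3$ and $diam(ecc(T))\ge 3$.

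For the upper bound I would argue directly. Let $B_1,\dots,B_t$ be the components of $T-c$, call $B_i$ \emph{deep} if it meets the set $D$ of diametral leaves, and recall that $m\ge 2$ branches are deep. For a vertex $v$ let $N(v)\subseteq D$ be the set of diametral leaves lying outside $v$'s branch, with $N(c)=D$. Two observations then finish it: first, $v$ is adjacent in $ecc(T)$ to every element of $N(v)$, since for such a leaf $z$ one has $d_T(v,z)=d_T(v,c)+r=ecc(v)$ (and $c$ is adjacent to all of $D$); second, two diametral leaves are $ecc(T)$-adjacent exactly when they lie in different branches. Now take $u\ne v$: if $N(u)\cap N(v)\ne\emptyset$ then $d_{ecc(T)}(u,v)\le 2$, and this intersection is nonempty whenever $u$ or $v$ equals $c$, whenever $u$ and $v$ lie in the same branch, whenever one of their branches is shallow, and whenever $m\ge 3$. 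The sole remaining configuration is $m=2$ with $u,v$ lying in the two distinct deep branches $B_1,B_2$; then $N(u)=D\cap B_2$ and $N(v)=D\cap B_1$ are both nonempty, and choosing $x\in N(u)$ and $y\in N(v)$ produces the path $u-x-y-v$ in $ecc(T)$ (here $x\sim y$ because they sit in different branches), so $d_{ecc(T)}(u,v)\le 3$.

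Combining the two bounds gives $diam(ecc(T))=3$. I expect the genuinely delicate step to be the claim in the lower bound that $u$ and $w$ have \emph{no} common neighbour in $ecc(T)$: one must rule out not only a diametral leaf adjacent to both (this is precisely where "no triple'' is used) but also any other would-be common neighbour, and this again relies on the eccentricity identity together with a short analysis of where such a vertex could lie relative to the $c$-paths of $x$ and $y$ — the content that is compressed into one sentence in the proof of Lemma~\ref{cc}. By contrast the upper-bound case check is routine once the two adjacency observations above are in hand; the only care needed is to enumerate the positions of $u$ and $v$ relative to the deep and shallow branches of $c$ so that every case is covered.
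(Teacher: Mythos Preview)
Your proof is correct. For the lower bound $diam(ecc(T))\ge 3$ you follow exactly the paper's route: the paper's proof of the corollary is the single sentence ``It is obvious from a part of proof of Lemma~\ref{cc}'', and the relevant part is precisely the construction of the distance-$3$ pair $(u,w)$ that you reproduce (you specialize to level $k=1$, but this changes nothing).

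Where you go beyond the paper is the upper bound $diam(ecc(T))\le 3$. The paper does not actually supply an argument for it here: the forward direction of Lemma~\ref{cc} only exhibits \emph{one} pair at distance exactly $3$, which yields $diam\ge 3$ but says nothing about other pairs, and the converse direction (Cases~1--4) assumes the triple exists and so does not apply under the present hypothesis. Your branch-by-branch analysis via the sets $N(v)\subseteq D$ fills this gap cleanly, and in fact establishes $diam(ecc(T))\le 3$ for \emph{every} one-center tree with $d\ge 4$, independent of the no-triple hypothesis --- so your argument could serve as the one-center half of Theorem~\ref{ccc} directly. The eccentricity identity $ecc(v)=d_T(v,c)+r$ that you isolate up front is what makes both halves transparent; the paper uses it implicitly throughout Lemma~\ref{cc} without stating it. Your flag on the ``no common neighbour'' step is well placed: the full justification is that, for $r\ge 2$, the $ecc(T)$-neighbourhood of a vertex at level $1$ consists \emph{only} of diametral leaves in other branches (any other candidate fails the eccentricity identity), after which the no-triple hypothesis forces $m=2$ and hence disjoint neighbourhoods.
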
 
\begin{proof}
	It is obvious from a part of proof of Lemma \ref{cc}.
\end{proof}

\begin{theorem}\label{ccc}
The diameter of $ecc(T)$ is at most 3 for any trees.
\end{theorem}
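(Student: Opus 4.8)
The plan is to case-split on the number of centers of $T$, which by the lemma of Jordan quoted above is either one or two, and in each case to read off the bound from the structural results already established; very little new work is needed, since the content of the theorem has effectively been absorbed into Lemma \ref{c}, Lemma \ref{cc} and Corollary \ref{ss}.

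\textbf{Case: $T$ has two centers.} Here Lemma \ref{c} gives $ecc(T)\cong \mathcal{ME}_{P_4}[-p_1,-p_2,-p_3,-p_4]$, so it suffices to check that a coclique extension of $P_4$ has diameter at most $3$. Write $V_1,V_2,V_3,V_4$ for the four coclique blocks in path order, where $V_1=T_{c_1}$, $V_2=U_2$, $V_3=U_1$, $V_4=T_{c_2}$ in the notation of Lemma \ref{c}. When $diam(T)\ge 3$ all four blocks are nonempty: $c_1\in T_{c_1}$ and $c_2\in T_{c_2}$ give $p_1,p_4\ge 1$, and since every longest path of $T$ crosses the central edge with one endpoint on each side, there are diametrical vertices in both $V_1$ and $V_2$, so $p_2,p_3\ge 1$. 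Consequently two vertices in adjacent blocks are adjacent; two vertices in the same block or in blocks at path-distance $2$ share a neighbour and so are at distance $2$; and two vertices of $V_1$ and $V_4$ are joined by a path of length $3$ through $V_2$ and $V_3$ but have no common neighbour, so are at distance exactly $3$. Hence $diam(ecc(T))=3$ in this case.

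\textbf{Case: $T$ has exactly one center.} If among the diametrical leaves of $T$ there is a triple $(x,y,z)$ pairwise at distance $diam(T)$, then Lemma \ref{cc} says $ecc(T)$ is $2$-self-centered, so $diam(ecc(T))=2$. Otherwise Corollary \ref{ss} gives $diam(ecc(T))=3$. Either way $diam(ecc(T))\le 3$, and combining the two cases proves the theorem.

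\textbf{Where care is needed.} There is no genuine obstacle in the argument, but the degenerate trees — for which Lemmas \ref{c} and \ref{cc} are vacuous, or, read literally, even fail — should be disposed of first. These are exactly the trees with $diam(T)\le 2$, namely $K_1$, $K_2$ and the stars $K_{1,n}$; by Table \ref{tab:EccentricGraphsDatas} each of these has $ecc(T)$ complete, so $diam(ecc(T))\le 1$. Handling them up front also ensures $rad(T)\ge 2$ in the remaining cases, so that the rooted-tree arguments underlying Lemmas \ref{c} and \ref{cc} apply without comment, and it confirms that $ecc(T)$ is connected, so that its diameter is a well-defined number.
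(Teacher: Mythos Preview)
Your proof is correct and follows the same route as the paper's: a case split on the number of centers, invoking Lemma~\ref{c} in the two-center case and Lemma~\ref{cc}/Corollary~\ref{ss} in the one-center case. You supply more detail than the paper does---in particular the explicit check that the coclique extension of $P_4$ has diameter $3$ and the treatment of the degenerate trees $K_1$, $K_2$, $K_{1,n}$---but the underlying argument is the same.
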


\begin{proof}
Based on Lemma \ref{c}, it can be inferred that $ecc(T)$ is isomorphic to the coclique extension of $P_4$, implying that $diam(ecc(T))=3$. Taking into account Lemma \ref{cc} and Corollary \ref{ss} , we can conclude directly.
\end{proof}

\begin{remark}
			Theorem \ref{ccc} is applicable only for trees. In general graphs, the diameter of the eccentric graph can exceed 3. For instance, consider the $3 \times 4$ grid graph $G$ shown in Figure 4. In this case, we have $diam(ecc(G)) = 5$, which demonstrates that the diameter can exceed 3 in general graphs.
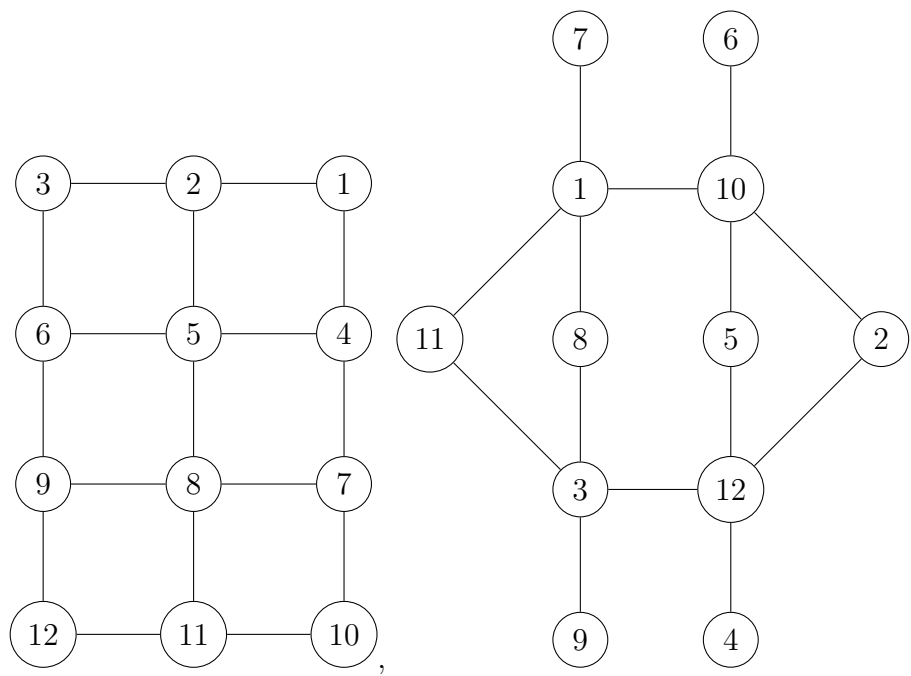
\begin{figure}[h]
	\centering
	\begin{tikzpicture}
		\node[circle, draw] (12) at (0,0) {12};
		\node[circle, draw] (11) at (2,0) {11};
		\node[circle, draw] (10) at (4,0) {10};
		\node[circle, draw] (9) at (0,2) {9};
		\node[circle, draw] (8) at (2,2) {8};
		\node[circle, draw] (7) at (4,2) {7};
		\node[circle, draw] (6) at (0,4) {6};
		\node[circle, draw] (5) at (2,4) {5};
		\node[circle, draw] (4) at (4,4) {4};
		\node[circle, draw] (3) at (0,6) {3};
		\node[circle, draw] (2) at (2,6) {2};
		\node[circle, draw] (1) at (4,6) {1};
		\draw (1) -- (2);
		\draw (2) -- (3);
		\draw (1) -- (4);
		\draw (4) -- (7);
		\draw (7) -- (10);
		\draw (10) -- (11);
		\draw (11) -- (12);
		\draw (12) -- (9);
		\draw (9) -- (6);
		\draw (6) -- (3);
		\draw (6) -- (5);
		\draw (5) -- (4);
		\draw (9) -- (8);
		\draw (8) -- (7);
		\draw (2) -- (5);
		\draw (5) -- (8);
		\draw (8) -- (11);
	\end{tikzpicture}, \begin{tikzpicture}
		
		\node[circle, draw] (6) at (2,8) {6};
		\node[circle, draw] (10) at (2,6) {10};
		\node[circle, draw] (5) at (2,4) {5};
		\node[circle, draw] (12) at (2,2) {12};
		\node[circle, draw] (4) at (2,0) {4};
		\node[circle, draw] (7) at (0,8) {7};
		\node[circle, draw] (1) at (0,6) {1};
		\node[circle, draw] (8) at (0,4) {8};
		\node[circle, draw] (3) at (0,2) {3};
		\node[circle, draw] (9) at (0,0) {9};
		\node[circle, draw] (2) at (4,4) {2};
		\node[circle, draw] (11) at (-2,4) {11};
		\draw (9) -- (3);
		\draw (3) -- (8);
		\draw (8) -- (1);
		\draw (1) -- (7);
		\draw (4) -- (12);
		\draw (12) -- (5);
		\draw (5) -- (10);
		\draw (10) -- (6);
		\draw (1) -- (10);
		\draw (3) -- (12);
		\draw (11) -- (1);
		\draw (11) -- (3);
		\draw (10) -- (2);
		\draw (12) -- (2);
	\end{tikzpicture}
	\caption{$3\times 4$ grid graph and its eccentric graph}
	 
\end{figure}

\end{remark}
It is clear that the diameter of $ecc(G)$  can not be greater than the diameter of $G$ since  $d_{ecc(G)}(u,v)\leq diam(G)$ for any vertices $u$ and $v$. Finally, we propose the following problems:\\

\textbf{Problem 1.} Characterize the graphs which have the same diameter with its eccentric graph.\\

\textbf{Problem 2.} Classify the graphs which is isomorphic to its eccentric graph.

\end{document}